\def\NZQ{\mathbb}               
\def\ZZ{{\NZQ Z}}
\def\RR{{\NZQ R}}
\def\frk{\mathfrak}               
\def\Phi{{\frk N}}
\def\ab{{\mathbf a}}
\def\eb{{\mathbf e}}
\def\tb{{\mathbf t}}
\def\xb{{\mathbf x}}
\def\opn#1#2{\def#1{\operatorname{#2}}} 
\opn\gr{gr}
\def\Mc{{\mathcal M}}
\def\Pc{{\mathcal P}}
\newtheorem{Theorem}{Theorem}[section]
\newtheorem{Lemma}[Theorem]{Lemma}
\newtheorem{Proposition}[Theorem]{Proposition}
\theoremstyle{definition}
\newtheorem{Example}[Theorem]{Example}
\newtheorem{Problem}[Theorem]{Problem}
\newtheorem{Question}[Theorem]{Question}
\let\epsilon\varepsilon
\let\phi=\varphi
\let\kappa=\varkappa
\opn\dis{dis}
\opn\height{height}
\opn\dist{dist}
\def\pnt{{\raise0.5mm\hbox{\large\bf.}}}
\opn\Lex{Lex}
\opn\conv{conv}
\def\codeg#1{{\rm codeg}(#1)}
\def\deg#1{{\rm deg}(#1)}
\def\reg#1{{\rm reg}(#1)}
\title{Codegree and regularity of stable set polytopes}
\author{Koji Matsushita}
\address{Koji Matsushita, Department of Pure and Applied Mathematics, Graduate School of Information Science and Technology, Osaka University, Suita, Osaka 565-0871, Japan}
\email{k-matsushita@ist.osaka-u.ac.jp}
\author{Akiyoshi Tsuchiya}
\address{Akiyoshi Tsuchiya,
Department of Information Science,
Faculty of Science,
Toho University,
2-2-1 Miyama, Funabashi, Chiba 274-8510, Japan} 
\email{akiyoshi@is.sci.toho-u.ac.jp}
\keywords{stable set polytope, codegree, clique number, chromatic number, matching polytope, perfect graph, $h$-perfect graph, toric ring, regularity}
\subjclass[2020]{52B20, 05C15, 13C10}
\date{}
\begin{document}

\maketitle
\begin{abstract}
The codegree $\codeg{\Pc}$ of a lattice polytope $\Pc$ is a fundamental invariant in discrete geometry. In the present paper, we investigate the codegree of the stable set polytope $\Pc_G$ associated with a simple graph $G$. Specifically, we establish the inequalities
\[
\omega(G) + 1 \leq \codeg{\Pc_G} \leq \chi(G) + 1,
\]
where $\omega(G)$ and $\chi(G)$ denote the clique number and the chromatic number of $G$, respectively. 
Furthermore, an explicit formula for $\codeg{\Pc_G}$ is given when $G$ is either a line graph or an $h$-perfect graph. Finally, as an application of these results, we provide upper and lower bounds on the regularity of the toric ring associated with $\Pc_G$.
\end{abstract}
\section{Introduction}
A \textit{lattice polytope} 
is a convex polytope all of whose vertices have integer coordinates. 
Let $\Pc \subset \RR^n$ be a full-dimensional lattice polytope. Then the \textit{codegree} of $\Pc$, denoted by $\codeg{\Pc}$, is the smallest positive integer $k$ such that the $k$th dilated polytope $k\Pc$ of $\Pc$ has an interior lattice point. 
On the other hand, the \textit{degree} of $\Pc$, denoted by $\deg{\Pc}$, is defined by $\deg{\Pc}=n+1-\codeg{\Pc}$.
Then $\deg{\Pc}$  coincides with the degree of the $h^*$-polynomial of $\Pc$, which encodes the number of lattice points in the dilatations of $\Pc$.
The codegree and the degree are fundamental invariants of lattice polytopes, playing significant roles in various fields, particularly in discrete geometry and commutative algebra.
See, e.g., \cite{BeckRobins2015book} for detailed information for the degree and codegree of lattice polytopes.

Throughout the present paper, we only treat simple graphs, which we simply call graphs instead of simple graphs.
The \textit{stable set polytope} $\Pc_G$ associated with a graph $G$ is the convex hull of the indicator vectors of all stable sets of $G$, where a set of vertices of $G$ is called \textit{stable} if no two of its elements are adjacent.
In the present paper, we investigate the codegree of $\Pc_G$.
The following is the first main result:
\begin{Theorem}\label{thm:bound_codeg}
    Let $G$ be a graph.
Then one has
\[
\omega(G)+1 \leq \codeg{\Pc_G} \leq \chi(G)+1,
\]
where $\omega(G)$ and $\chi(G)$ denote the clique number and the chromatic number of G, respectively.
In particular, if $G$ is perfect, then we obtain
\[
\omega(G)+1 = \codeg{\Pc_G} = \chi(G)+1.
\]
\end{Theorem}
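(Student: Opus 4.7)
The plan is to prove the two inequalities separately, then invoke the definition of perfect graph for the equality case.

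For the lower bound $\codeg{\Pc_G}\geq \omega(G)+1$, I use the clique inequality. Fix a clique $K$ of $G$ with $|K|=\omega(G)$; any stable set contains at most one vertex of $K$, so $\sum_{i\in K} x_i \leq 1$ is a valid inequality for $\Pc_G$, hence $\sum_{i\in K} x_i \leq k$ is valid on $k\Pc_G$. If $x\in\ZZ^n\cap \text{int}(k\Pc_G)$ for $k=\codeg{\Pc_G}$, the facet-defining non-negativity inequalities $x_i\geq 0$ force $x_i\geq 1$ for every $i$. Moreover, any non-trivial valid inequality is strict at an interior point of a full-dimensional polytope, so $\sum_{i\in K} x_i < k$. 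Combining, $\omega(G)\leq \sum_{i\in K} x_i < k$, hence $k\geq \omega(G)+1$.

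For the upper bound $\codeg{\Pc_G}\leq \chi(G)+1$, I exhibit the all-ones vector $\mathbf{1}=(1,\dots,1)$ as an interior lattice point of $(\chi(G)+1)\Pc_G$. Setting $\chi=\chi(G)$, a proper $\chi$-coloring partitions $V(G) = S_1 \sqcup \cdots \sqcup S_\chi$ into stable sets, and the corresponding indicator vectors $\eb_{S_1},\dots,\eb_{S_\chi}$ are vertices of $\Pc_G$ summing to $\mathbf{1}$. Thus $\mathbf{1}/\chi$ is a convex combination of vertices of $\Pc_G$, i.e., $\mathbf{1}\in \chi\Pc_G$. To upgrade this to the interior of $(\chi+1)\Pc_G$, I classify facets of $\Pc_G$ into two types. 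For non-negativity facets $x_i\geq 0$, the coordinate $1/(\chi+1)>0$ is strictly positive. For any other facet $\langle a,x\rangle \leq b$, the fact that $\mathbf{0}\in\Pc_G$ does not lie on it forces $b>0$; combined with $\mathbf{1}/\chi\in\Pc_G$ giving $\langle a,\mathbf{1}\rangle\leq \chi b$, this yields $\langle a, \mathbf{1}/(\chi+1)\rangle \leq \chi b/(\chi+1) < b$. So every facet inequality is strict at $\mathbf{1}/(\chi+1)$, placing $\mathbf{1}$ in the interior of $(\chi+1)\Pc_G$.

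For the perfect-graph case, by the defining property $\omega(G)=\chi(G)$, so the two bounds coincide and the stated equality follows immediately.

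The main obstacle I anticipate is the small classification step asserting that every facet of $\Pc_G$ not passing through the origin has strictly positive right-hand side. This requires a short verification: evaluating any facet-defining inequality $\langle a,x\rangle\leq b$ at $\mathbf{0}\in\Pc_G$ gives $b\geq 0$, and the case $b=0$ reduces --- by evaluating at singleton vertices $\eb_{\{j\}}\in\Pc_G$ to get $a\leq 0$ and then using $\Pc_G\subseteq \RR^n_{\geq 0}$ --- to showing that the facet must in fact be a coordinate facet $x_i\geq 0$, so the only facets with $b=0$ are already accounted for by the non-negativity type.
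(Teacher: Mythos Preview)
Your proof is correct. The lower bound argument is essentially the same as the paper's: both use the clique inequality $\sum_{i\in Q}x_i\leq 1$ together with the fact that any interior lattice point has all coordinates $\geq 1$ (the paper packages this as a lemma reducing to the all-ones vector $\eb_{[n]}$, while you work with a generic interior lattice point directly).

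For the upper bound the two arguments diverge. The paper constructs an auxiliary complete multipartite graph $G'$ from the colour classes, observes $G'$ is perfect with $\chi(G')=\chi(G)$, applies the lower-bound equality to get $\codeg{\Pc_{G'}}=\chi(G)+1$, and then uses the containment $\Pc_{G'}\subset\Pc_G$ (same dimension) to push the interior point $\eb_{[n]}$ into ${\rm int}((\chi(G)+1)\Pc_G)$. Your route is more direct: you place $\mathbf{1}/\chi$ in $\Pc_G$ via the colour-class decomposition, shrink to $\mathbf{1}/(\chi+1)$, and verify strictness of every facet inequality using only the dichotomy ``coordinate facet vs.\ facet with positive right-hand side''. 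This avoids the detour through an auxiliary perfect graph and does not need the lower-bound case for perfect graphs as an input; on the other hand, the paper's approach makes the role of perfect graphs conceptually visible and reuses the perfect case rather than the facet analysis. The facet dichotomy you rely on is exactly what the paper proves in its preliminary lemma (down-closure of stable sets forces $b=0$ facets to be coordinate facets and $b>0$ facets to have $a\in\ZZ_{\geq 0}^n$), so the two proofs ultimately rest on the same structural fact about $\Pc_G$.
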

It is well known that $\omega(G) \leq \chi(G)$ for any graph $G$. From the above theorem, we can regard $\codeg{\Pc_G} - 1$ as an invariant lying between these two important invariants in graph theory. Furthermore, it is an important fact that there exists a graph $G$ satisfying $\omega(G) < \codeg{\Pc_G} - 1 < \chi(G)$ (see Example \ref{exam:inequalities}).

Also, Theorem \ref{thm:bound_codeg} implies that $\codeg{\Pc_G}$ can be computed by using invariants of the underlying graph $G$ when $G$ is a perfect graph. 
Next, for several classes of graphs, we give an explicit formula of $\codeg{\Pc_G}$ in terms of $G$.
The \textit{matching polytope} $\Mc_G$ of $G$ is the convex hull of the indicator vectors of all matchings of $G$, where a set of edges of $G$ is called a \textit{matching} if no two of its elements share a common endpoint.
We can regard $\Mc_G$ as the stable set polytope of the line graph of $G$. Namely, $\Mc_G=\Pc_{L(G)}$, where $L(G)$ denotes the line graph of $G$. Then an explicit formula for $\codeg{\Mc_G}$ is given as follows:
\begin{Theorem}\label{thm:matching_polytope}
    Let $G$ be a graph with connected components $G_1,\ldots,G_r$.
    Then one has
\[
\codeg{\Mc_G}=\begin{cases}
    \Delta(G)+2 & \left(
    \mbox{if $\Delta(G)$ is even and each $G_i$ with $\Delta(G_i)=\Delta(G)$ is a complete graph}
    \right),\\
    \Delta(G)+1 & (\mbox{otherwise}),
\end{cases}
\]
where 
 $\Delta(G)$ 
 denotes the maximal degree of $G$.
\end{Theorem}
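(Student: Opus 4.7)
The plan is to sandwich $\codeg{\Mc_G}$ via Theorem~\ref{thm:bound_codeg} and then decide between the two bounds by a direct integer-point analysis using Edmonds' facet description of the matching polytope. Since $\Mc_G=\Pc_{L(G)}$, a star at a vertex of maximum degree contributes a clique of size $\Delta(G)$ in $L(G)$, so $\omega(L(G))\geq \Delta(G)$, while Vizing's theorem gives $\chi(L(G))=\chi'(G)\leq \Delta(G)+1$. Theorem~\ref{thm:bound_codeg} then forces
\[
\Delta(G)+1 \;\leq\; \codeg{\Mc_G} \;\leq\; \Delta(G)+2.
\]

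I would next reduce to the connected case. Since $\Mc_G = \Mc_{G_1}\times\cdots\times\Mc_{G_r}$ and the codegree of a product of full-dimensional lattice polytopes equals the maximum of their codegrees, it suffices to analyze each connected component and reassemble. For a connected $G$, Edmonds' theorem describes $\Mc_G$ by $x_e\geq 0$, the vertex inequalities $\sum_{e\ni v} x_e\leq 1$ for $v\in V(G)$, and the odd-set inequalities $\sum_{e\in E(S)} x_e\leq \lfloor|S|/2\rfloor$ for odd $S\subseteq V(G)$ with $|S|\geq 3$. An integer $x$ therefore lies in $k\Mc_G^\circ$ iff $x_e\geq 1$ for every edge, $\sum_{e\ni v} x_e\leq k-1$ for every vertex, and $\sum_{e\in E(S)} x_e\leq k\lfloor|S|/2\rfloor-1$ for every such $S$.

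The heart of the argument is to test the all-ones vector $x_e=1$ at $k=\Delta(G)+1$. The vertex inequalities are immediate. For the odd-set inequalities I would split on $|S|$: the range $|S|\leq \Delta(G)$ is handled by $|E(S)|\leq \binom{|S|}{2}$, and the range $|S|\geq \Delta(G)+3$ by $|E(S)|\leq |S|\Delta(G)/2$. The delicate intermediate case $|S|=\Delta(G)+2$ forces $\Delta(G)$ odd, and there the parity of $|S|\Delta(G)$ just barely saves the bound: since $|S|\Delta(G)$ is odd one has $|E(S)|\leq (|S|\Delta(G)-1)/2$, which equals $k\lfloor|S|/2\rfloor-1$ exactly. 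The only remaining case is $|S|=\Delta(G)+1$, which forces $\Delta(G)$ even, and here $|E(S)|\leq \binom{\Delta(G)+1}{2}-1$ fails precisely when $S$ induces a $K_{\Delta(G)+1}$; since every vertex of such $S$ exhausts its $\Delta(G)$ neighbours within $S$, this subset must be a connected component equal to $K_{\Delta(G)+1}$.

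Thus, if a connected $G$ is not of the form $K_{\Delta(G)+1}$ with $\Delta(G)$ even, the all-ones vector certifies $\codeg{\Mc_G}=\Delta(G)+1$. Otherwise $G=K_{\Delta(G)+1}$ with $\Delta(G)$ even, and any integer $x\in(\Delta(G)+1)\Mc_G^\circ$ must have $x_e=1$ on every edge (each vertex has exactly $\Delta(G)$ incident edges, each of value at least $1$, saturating the vertex inequality); the odd-set inequality for $S=V(G)$ then reduces to $\binom{\Delta(G)+1}{2}\leq (\Delta(G)+1)\Delta(G)/2-1$, a contradiction, so $\codeg{\Mc_G}=\Delta(G)+2$. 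Reassembling over the components yields the stated dichotomy. The main obstacle I anticipate is the tight parity bookkeeping at $|S|=\Delta(G)+2$ with $\Delta(G)$ odd: the naive bound $|E(S)|\leq |S|\Delta(G)/2$ would overshoot by exactly $\tfrac{1}{2}$, and only the impossibility of an odd-regular induced subgraph on an odd number of vertices keeps the case analysis from breaking.
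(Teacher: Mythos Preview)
Your proof is correct and, like the paper's, ultimately tests the all-ones vector against Edmonds' description of $\Mc_G$, but the route differs in two respects. First, you invoke Theorem~\ref{thm:bound_codeg} together with Vizing's theorem to pin down $\codeg{\Mc_G}\in\{\Delta(G)+1,\Delta(G)+2\}$ a priori; the paper never appeals to edge-colouring and instead extracts both bounds directly from the Edmonds inequalities. Second, you handle the odd-set constraints by a four-way case split on $|S|$ relative to $\Delta(G)$, with a delicate parity save at $|S|=\Delta(G)+2$; the paper replaces all of this by the single uniform chain
\[
|E(G_U)|\;\le\;\tfrac{\Delta(G_U)\,|U|}{2}\;\le\;\tfrac{(\Delta(G_U)+1)(|U|-1)}{2}\;\le\;(\Delta(G)+1)\tfrac{|U|-1}{2},
\]
observing that equality throughout forces $G_U=K_{|U|}=G$. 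The paper's chain is slicker and self-contained, while your argument buys an explicit link to Vizing and, for the $K_{\Delta+1}$ case, a direct exclusion of \emph{all} integer interior points (rather than citing Lemma~\ref{lem:int_point}). One small caution: your ``iff'' characterisation of integer interior points via strict versions of \emph{all} Edmonds inequalities is correct, but not because every listed inequality is a facet (the vertex constraint for $K_3$, for instance, is not); rather, each inequality with $|S|\ge3$ and each vertex inequality defines a \emph{proper} face of $\Mc_G$, so interior points satisfy it strictly, and integrality then gives the $\le kb-1$ form.
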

Note that $\omega(L(G))=\Delta(G)$ unless $G$ is a triangle, and if $G$ is a complete graph with an odd number of vertices $\geq 5$, then one has
\[
\Delta(G)+1=\omega(L(G))+1 < \codeg{\Pc_{L(G)}} = \chi(L(G))+1=\Delta(G)+2.
\]

A graph is said to be \textit{$h$-perfect} if $\Pc_G$ is defined by the constraints corresponding to cliques and odd holes, and the nonnegativity constraints. Note that every perfect graph is $h$-perfect.
Then for an $h$-perfect graph $G$, an explicit formula for $\codeg{\Pc_G}$ is given as follows:

\begin{Theorem}\label{thm:h-perfect}
    Let $G$ be an $h$-perfect graph.
Then one has
\begin{align*}
\codeg{\Pc_G}&=\omega(G)+1.
\end{align*}
\end{Theorem}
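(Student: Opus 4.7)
The lower bound $\omega(G)+1\leq\codeg{\Pc_G}$ is already provided by Theorem~\ref{thm:bound_codeg}, so the task reduces to exhibiting an interior lattice point of $k\Pc_G$ for $k=\omega(G)+1$. The plan is to exploit the $h$-perfect description of $\Pc_G$: it is cut out by $x_i\geq 0$, by $\sum_{i\in C}x_i\leq 1$ for every clique $C$, and by $\sum_{i\in H}x_i\leq (|H|-1)/2$ for every odd hole $H$. After dilating by $k$, a lattice point lies in the interior of $k\Pc_G$ precisely when it satisfies each of these inequalities strictly (with right-hand sides scaled by $k$).

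My candidate is the all-ones vector $\mathbf{1}=(1,\ldots,1)\in\ZZ^n$. The nonnegativity inequalities are trivially strict. For a clique $C$ one has $\sum_{i\in C}1=|C|\leq\omega(G)<k$ by the very choice of $k$, so the clique inequalities are strict as well. The substantive check is the odd hole constraint: for an induced odd cycle $H$ of length $2m+1\geq 5$ one needs $2m+1<km$, which rearranges to $m(\omega(G)-1)>1$. Since the existence of an odd hole in $G$ already forces $\omega(G)\geq 2$, and any odd hole has $m\geq 2$, this inequality is automatic.

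A small edge-case remark finishes the argument: if $G$ has no odd holes the last family of inequalities is vacuous, and if $\omega(G)=1$ then $G$ is edgeless so $\Pc_G=[0,1]^n$ and the claim is immediate. Hence $\mathbf{1}$ lies in the interior of $(\omega(G)+1)\Pc_G$, yielding $\codeg{\Pc_G}\leq\omega(G)+1$, and combined with Theorem~\ref{thm:bound_codeg} this gives equality. I do not anticipate a genuine obstacle; the content sits entirely in recognizing that the explicit hyperplane description provided by $h$-perfectness makes $\mathbf{1}$ an obvious interior witness, and the only care needed is the straightforward arithmetic for odd holes of length at least five.
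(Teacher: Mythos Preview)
Your argument is correct and mirrors the paper's proof: both verify that the all-ones vector $\eb_{[n]}$ lies in the interior of $(\omega(G)+1)\Pc_G$ by checking the clique and odd-cycle constraints of the $h$-perfect description strictly, reducing the latter to the inequality $(\omega(G)-1)(|C|-1)>2$. The only cosmetic difference is that the paper phrases the description via all odd cycles and handles the triangle case $|C|=3$ separately (noting it forces $\omega(G)\geq 3$), whereas you restrict to odd holes of length $\geq 5$ and absorb triangles into the clique check; these are equivalent.
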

Note that if $G$ is an odd cycle of length $\geq 5$, then $G$ is $h$-perfect and  one has
\[
3=\omega(G)+1=\codeg{\Pc_G}< \chi(G)+1=4.
\]

Finally, we discuss the regularity of the toric ring of $\Pc_G$.
In combinatorial commutative algebra, finding methods to evaluate algebraic invariants using combinatorial invariants is an important problem. For example, in \cite{HaVanTuel} and \cite{Katzman}, the regularity of edge ideals is evaluated using invariants of the underlying graphs. Similarly, in this paper, we establish upper and lower bounds for the regularity of the toric rings of stable set polytopes in terms of invariants of the underlying graphs.
For a lattice polytope $\Pc$, let $K[\Pc]$ be the toric ring associated with $\Pc$ over a field $K$ and $\reg{K[\Pc]}$ the regularity of $K[\Pc]$. As an application of Theorem \ref{thm:bound_codeg}, we obtain the following.
\begin{Theorem}\label{thm:reg}
    Let $G$ be a graph with $n$ vertices.
    Then one has
    \[
n-\chi(G) \leq \reg{K[\Pc_G]}.
    \]
    Moreover, if $K[\Pc_G]$ is normal, then we obtain
    \[
\reg{K[\Pc_G]} \leq n-\omega(G). 
    \]
    In particular, if $G$ is perfect, then one has
        \[
n-\chi(G)=\reg{K[\Pc_G]} = n-\omega(G). 
    \]
\end{Theorem}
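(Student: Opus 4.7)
The stable set polytope $\Pc_G\subset\RR^n$ is full-dimensional, since the empty set and each singleton $\{v\}$ are stable, so the origin and every standard basis vector $\eb_v$ lie in $\Pc_G$, yielding $n+1$ affinely independent points. Consequently $\dim K[\Pc_G]=n+1$ and
\[
\deg\Pc_G=(n+1)-\codeg\Pc_G,
\]
so Theorem~\ref{thm:bound_codeg} immediately delivers $n-\chi(G)\le\deg\Pc_G\le n-\omega(G)$. The whole statement then reduces to showing that $\reg K[\Pc_G]\ge\deg\Pc_G$ unconditionally and $\reg K[\Pc_G]=\deg\Pc_G$ under the normality hypothesis.

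For the upper bound I would invoke Hochster's theorem: under normality $K[\Pc_G]$ is Cohen-Macaulay, so the formula $\reg R=a(R)+\dim R$ applies, and the Danilov-Stanley description of the canonical module of a normal toric ring identifies its $a$-invariant as $-\codeg\Pc_G$. Hence $\reg K[\Pc_G]=\deg\Pc_G\le n-\omega(G)$.

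For the lower bound, which must not use normality, I would compare $K[\Pc_G]$ with its integral closure, the Ehrhart ring $E(\Pc_G)$, which is always normal and Cohen-Macaulay. Since $E(\Pc_G)$ is module-finite over $K[\Pc_G]$ with the same field of fractions, the short exact sequence $0\to K[\Pc_G]\to E(\Pc_G)\to C\to 0$ has torsion cokernel and so $\dim_{K[\Pc_G]}C\le n$. The local cohomology long exact sequence then produces a surjection $H^{n+1}_{\mm}(K[\Pc_G])\twoheadrightarrow H^{n+1}_{\mm}(E(\Pc_G))$; since the right-hand side is nonzero in degree $-\codeg\Pc_G$, one obtains $a(K[\Pc_G])\ge-\codeg\Pc_G$, whence
\[
\reg K[\Pc_G]\;\ge\;a(K[\Pc_G])+(n+1)\;\ge\;\deg\Pc_G\;\ge\;n-\chi(G).
\]

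When $G$ is perfect, $\omega(G)=\chi(G)$ and $K[\Pc_G]$ is known to be normal (for instance via a regular unimodular triangulation of $\Pc_G$), so both bounds coincide. The main obstacle I expect is the lower-bound step: one must carefully check that the top local cohomology surjection is genuine and that it transports the $a$-invariant lower bound from $E(\Pc_G)$ back to $K[\Pc_G]$. Once this is granted, the remainder is routine bookkeeping with standard facts.
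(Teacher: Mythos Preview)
Your proof is correct and follows the same overall strategy as the paper: reduce everything to Theorem~\ref{thm:bound_codeg} via the identity $\deg{\Pc_G}=n+1-\codeg{\Pc_G}$, then establish $\reg{K[\Pc_G]}\ge\deg{\Pc_G}$ in general and $\reg{K[\Pc_G]}=\deg{\Pc_G}$ under normality. The only difference is packaging: the paper invokes these two facts as black boxes (Proposition~\ref{prop:span}, citing \cite{HKN} for spanning polytopes, and Proposition~\ref{prop:IDP} for IDP polytopes), whereas you essentially reprove them via the normalization short exact sequence, Grothendieck vanishing for the torsion cokernel, and the Danilov--Stanley description of the canonical module. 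Your local-cohomology argument for the lower bound is exactly the mechanism behind \cite[p.~5952]{HKN}, so nothing genuinely new is happening---you have just unpacked the citations.
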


The present paper is organized as follows:
In Section \ref{sect:bound}, after recalling the definitions and properties of stable set polytopes and perfect graphs, we give a proof of Theorem \ref{thm:bound_codeg}. Section \ref{sect:formula} gives explicit formulas for $\codeg{\Pc_G}$ in the cases where  $G$  is a line graph or an $h$-perfect graph. In particular, Section \ref{subsect:matching} proves Theorem \ref{thm:matching_polytope}, and Section \ref{subsect:h-perfect} proves Theorem \ref{thm:h-perfect}. Section \ref{sect:reg} discusses the regularity of  $K[\Pc_G]$ , with a focus on proving Theorem \ref{thm:reg}. Finally, Section \ref{sect:exam} considers examples related to the inequalities in Theorem \ref{thm:bound_codeg}.

\section{bounds on the codegree of stable set polytopes}
\label{sect:bound}
In this section, a proof of Theorem \ref{thm:bound_codeg} is given.
First, we recall the definition of stable set polytopes.
Let $G$ be a graph on $[n]:=\{1,2,\ldots,n\}$ with edge set $E(G)$.
A subset $S \subset [n]$ is called a \textit{stable set} (or an \textit{independent set}) of $G$ if $\{i,j\} \notin E(G)$ for all $i,j \in S$ with $i \neq j$. In particular, the empty set $\emptyset$ and any singleton $\{i\}$ with $i \in [n]$ are stable sets of $G$.
Let $S(G)$ be the set of all stable sets of $G$. For a subset $A \subset [n]$, we denote $\eb_A \in \RR^n$ the indicator vector of $A$, namely, $\eb_A=\sum_{i \in A} \eb_i$, where $\eb_i$ is the $i$th unit coordinate vector in $\RR^n$.  Remark that $\eb_{\emptyset}=\mathbf{0}:=(0,\ldots,0)\in \RR^n$. Then the \textit{stable set polytope} of $G$ is defined as
\[
\Pc_G=\conv\{\eb_S: S \in S(G)\} \subset \RR^n.
\]
Note that $\dim \Pc_G$ is equal to the number of vertices of $G$.

Next, we define a perfect graph. A subset $C \subset [n]$ is said to be \textit{clique} of $G$ if for any $i, j \in C$ with $i \neq j$, $\{i,j\} \in E(G)$. We denote $\omega(G)$ the maximal cardinality of cliques of $G$, which is called the \textit{clique number} of $G$.
A map $f : [n] \to [k]$ with a positive integer $k$ is said to be a \textit{coloring} of $G$, if for any $\{i,j\} \in E(G)$, $f(i) \neq f(j)$.
The minimal integer $k$ such that there exists a coloring $f : [n] \to [k]$ of $G$ is called a \textit{chromatic number} of $G$.
In general, one has
\[
\omega(G) \leq \chi(G).
\]
However, the equality does not always hold (e.g., an odd cycle of length $\geq 5$).
A graph $G$ is called \textit{perfect} if for any induced subgraph $H$ of $G$, $\omega(H)=\chi(H)$.
Perfect graphs were introduced by Berge in \cite{Berge1960}. A \textit{hole} is an induced cycle of length $\geq 5$ and an \textit{antihole} is the complement graph of a hole. In 2006, Chudnovsky, Robertson, Seymour, and Thomas solved a famous conjecture in graph theory, which was conjectured by Berge and is now known as the strong perfect graph theorem.
\begin{Proposition}[\cite{strongperfect}]
\label{prop:strong_perfect}
    A graph is perfect if and only if it has no odd holes and no odd antiholes.
\end{Proposition}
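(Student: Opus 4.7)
The plan is to prove both directions of the biconditional. The easy direction is that a graph containing an induced odd hole or odd antihole cannot be perfect: the odd cycle $C_{2k+1}$ with $k \geq 2$ satisfies $\omega(C_{2k+1}) = 2 < 3 = \chi(C_{2k+1})$, and its complement satisfies $\omega(\overline{C_{2k+1}}) = k < k+1 = \chi(\overline{C_{2k+1}})$, so any such induced subgraph already witnesses imperfection.

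The converse is the hard direction, due to Chudnovsky, Robertson, Seymour, and Thomas. My plan is to follow their approach. First, I would single out a list of \emph{basic} Berge graphs---bipartite graphs, line graphs of bipartite graphs, the complements of these two classes, and the so-called double split graphs---and verify directly that each basic class is perfect. The bipartite case uses K\"onig's theorem, the line-graph case uses K\"onig applied to edge colourings, and the complementary classes then follow from Lov\'asz's perfect graph theorem (perfection is closed under complementation). The double split graphs require a separate direct check.

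The central step is then a structure theorem: every Berge graph is either basic, or admits a decomposition of one of a short list of prescribed types (a 2-join, the complement of a 2-join, or a balanced skew partition). Each such decomposition is shown, by explicit patching of optimal clique covers and colourings across the decomposition, to preserve perfection. An induction on $|V(G)|$, with the basic graphs as the base, then completes the proof.

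The principal obstacle is the structure theorem itself, whose proof in the original paper runs to well over one hundred pages of intricate case analysis, handling configurations such as ``jewels'' and ``prisms'' and extracting the requisite decomposition by iterated local arguments on neighbourhoods of vertices. For this reason, in the present paper Proposition~\ref{prop:strong_perfect} is invoked as a black box, with the detailed proof deferred entirely to \cite{strongperfect}.
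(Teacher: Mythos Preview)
Your proposal is correct: the paper does not prove Proposition~\ref{prop:strong_perfect} at all but merely states it with a citation to \cite{strongperfect}, exactly as you conclude in your final paragraph. Your outline of the Chudnovsky--Robertson--Seymour--Thomas strategy is accurate as background, but for the purposes of this paper the result is used purely as a black box, so no proof is required here.
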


Also, there is a polyhedral characterization of perfect graphs.

\begin{Proposition}[{\cite[Theorem~3.1]{Chvatal1975}}]
\label{prop:poly_perfect}
Let $G$ be a graph on $[n]$. Then $G$ is perfect if and only if
\begin{align}\label{ineq:perfect}
    \Pc_G 
    =
    \left\{ 
    \xb \in \RR^n \; : \; 
    \begin{array}{rlc}
        x_i &\geq 0, & \forall i \in [n] \vspace{0.2cm}\\
        \displaystyle \sum_{i \in Q} x_i &\leq 1, &  \text{ for any maximal clique $Q$} 
    \end{array}
    \right\}.
\end{align}
\end{Proposition}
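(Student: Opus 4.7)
The plan is to verify the two directions of the biconditional; write $\Pc^*_G$ for the polyhedron on the right-hand side of \eqref{ineq:perfect}. The inclusion $\Pc_G \subseteq \Pc^*_G$ always holds: for any stable set $S$ and any clique $Q$ one has $|S \cap Q| \leq 1$, so $\sum_{i \in Q}(\eb_S)_i \leq 1$, and of course $(\eb_S)_i \geq 0$. The substance of the proposition is that the reverse inclusion is equivalent to perfection of $G$.

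For the direction \emph{$G$ perfect $\Rightarrow \Pc_G = \Pc^*_G$}, I would show that every vertex of $\Pc^*_G$ is a $0/1$ point, which by the clique constraints then forces it to be $\eb_S$ for some stable $S$. Equivalently, for every $c \in \ZZ^n_{\geq 0}$ the LP $\max\{c \cdot x : x \in \Pc^*_G\}$ must attain its optimum at an integer point. By LP duality this maximum equals the minimum of $\sum_Q y_Q$ over $(y_Q)$ indexed by the maximal cliques $Q$ of $G$, subject to $y_Q \geq 0$ and $\sum_{Q \ni i} y_Q \geq c_i$. I would interpret the primal and the dual combinatorially through the vertex-replication graph $G(c)$, in which vertex $i$ is duplicated into $c_i$ mutually non-adjacent copies: the primal becomes the stability number $\alpha(G(c))$ and the dual becomes the minimum clique cover number of $G(c)$. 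Lovász's replication lemma ensures that $G(c)$ inherits perfectness from $G$, and perfection equates these two integer invariants; this forces the LP to admit an integer optimum corresponding to a stable set of $G$.

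For the converse, \emph{$\Pc_G = \Pc^*_G \Rightarrow G$ perfect}, I would establish $\chi(H) = \omega(H)$ for every induced subgraph $H = G[A]$. The face of $\Pc_G$ cut out by $x_i = 0$ for $i \notin A$ coincides with $\Pc_H$, and the maximal clique constraints of $G$ restrict to a valid and sufficient set of clique constraints for $H$; so $\Pc_H$ inherits the same kind of polyhedral description. Applying the LP-duality argument above to $H$ with unit weights on $A$ then forces the integer identity $\alpha(H)$ equals the clique-cover number of $H$, and combining this with Lovász's weak perfect graph theorem (perfection is closed under complementation) recovers $\chi(H) = \omega(H)$.

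The principal obstacle is the forward direction, specifically the step that promotes the fractional LP-duality identity to the integer equality between $\alpha(G(c))$ and the clique-cover number of $G(c)$. This is precisely where Lovász's replication lemma and the perfect graph theorem enter with their full strength, and it is the conceptual heart of Chvátal's original argument.
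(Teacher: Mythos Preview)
The paper does not supply its own proof of this proposition: it is quoted as \cite[Theorem~3.1]{Chvatal1975} and used as a black box, so there is no argument in the paper to compare against. Your outline is essentially Chv\'atal's classical route, and the forward direction via weighted LP duality together with Lov\'asz's replication lemma is standard and correct (with the small proviso that one first reduces an arbitrary objective $c$ to a nonnegative one by noting that any coordinate with $c_i<0$ may be set to $0$ at an optimum, using that $\Pc^*_G$ is down-closed).

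The converse direction, however, has a real gap. From $\Pc_H=\Pc^*_H$ and LP duality with unit weights you obtain that $\alpha(H)$ equals the \emph{fractional} clique-cover number of $H$; nothing you have written forces the dual LP to have an integral optimum, so the integer identity $\alpha(H)=\chi(\overline{H})$ does not follow. Invoking ``the LP-duality argument above'' is circular here, because that argument used perfection (via the replication lemma) to certify integrality, and perfection is precisely what you are trying to prove. The clean repair is to observe that $\tfrac{1}{\omega(H)}\,\eb_{V(H)}$ satisfies every clique constraint and hence lies in $\Pc^*_H=\Pc_H$; writing it as a convex combination of stable-set indicators and pairing with $\eb_{V(H)}$ yields $|V(H)|\le\omega(H)\,\alpha(H)$ for every induced subgraph $H$, which is exactly Lov\'asz's characterisation of perfection (equivalent to the weak perfect graph theorem you already invoke).
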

Note that for a (not necessarily perfect) graph $G$, each inequality appearing in (\ref{ineq:perfect}) defines a facet of $\Pc_G$ (\cite{PadbergPackingpolyhedra}).

Before proving Theorem \ref{thm:bound_codeg}, we show the following lemma.

\begin{Lemma}\label{lem:int_point}
    Let $G$ be a graph on $[n]$.
For any $k\in \ZZ_{>0}$, one has ${\rm int}(k\Pc_G) \cap \ZZ^n \neq \emptyset$ if and only if $\eb_{[n]} \in {\rm int}(k\Pc_G)$. In particular, one has 
\[
\codeg{\Pc_G}=\min( k \in \ZZ_{> 0} : \eb_{[n]} \in {\rm int}(k\Pc_G)).
\]
\end{Lemma}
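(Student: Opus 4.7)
The plan is to exploit the \emph{anti-blocking} (down-closed) structure of the stable set polytope. Because any subset of a stable set is again a stable set, the vertex set $\{\eb_S : S \in S(G)\}$ is closed under zeroing out coordinates, and taking convex hulls preserves this: if $\xb \in \Pc_G$ and $\mathbf{0} \le \yb \le \xb$ coordinatewise, then $\yb \in \Pc_G$. In particular, $\mathbf{0} \in \Pc_G \subseteq [0,1]^n$. A standard consequence of this anti-blocking property is that every facet-defining inequality of $\Pc_G$ is either a nonnegativity constraint $x_i \ge 0$ for some $i \in [n]$, or an upper-bound constraint $\langle \ab, \xb \rangle \le b$ with $\ab \in \RR^n_{\ge 0}$ and $b > 0$ (if any coefficient were negative, the facet would fail to be supporting near the vertex obtained by increasing that coordinate, and $b \le 0$ would contradict $\mathbf{0} \in \Pc_G$).

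Given this, $\xb \in {\rm int}(k\Pc_G)$ is characterized by the strict inequalities $x_i > 0$ for all $i \in [n]$ together with $\langle \ab, \xb \rangle < kb$ for each upper-bound facet. For the nontrivial ($\Rightarrow$) direction, suppose $\xb \in {\rm int}(k\Pc_G) \cap \ZZ^n$. Integrality combined with $x_i > 0$ forces $x_i \ge 1$ for each $i$, i.e.\ $\eb_{[n]} \le \xb$ coordinatewise. For any upper-bound facet $\langle \ab, \xb \rangle \le b$, nonnegativity of $\ab$ then yields
\[
\langle \ab, \eb_{[n]} \rangle \;\le\; \langle \ab, \xb \rangle \;<\; kb,
\]
so $\eb_{[n]}$ satisfies every strict inequality cutting out ${\rm int}(k\Pc_G)$, hence $\eb_{[n]} \in {\rm int}(k\Pc_G)$. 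The ($\Leftarrow$) direction is immediate since $\eb_{[n]} \in \ZZ^n$. The ``in particular'' assertion then follows directly from the definition of codegree.

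The only real subtlety is justifying the shape of the facet description, which is precisely where the anti-blocking property of $\Pc_G$ is used; once that is in hand, the argument reduces to the short monotonicity estimate $\langle \ab, \eb_{[n]} \rangle \le \langle \ab, \xb \rangle$ together with integrality. No property specific to perfect graphs or cliques is needed, so the lemma holds for arbitrary simple graphs $G$.
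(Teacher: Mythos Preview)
Your argument is correct and is essentially the same as the paper's: both use the down-closed (anti-blocking) structure of $\Pc_G$ to conclude that every non-trivial facet has nonnegative normal vector and positive right-hand side, and then observe that an interior lattice point $\xb$ satisfies $\xb \ge \eb_{[n]}$, whence $\langle \ab, \eb_{[n]} \rangle \le \langle \ab, \xb \rangle < kb$. The only cosmetic differences are that the paper writes the facet normals as integer vectors and phrases the monotonicity step as $\sum a_i \le \sum a_i c_i < kb$.
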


\begin{proof}
   Since subsets of stable sets are also stable, for any inequality $\sum_{i=1}^na_ix_i \leq b$ defining a facet of $\Pc_G$, one has 
   \begin{enumerate}
        \item[(i)] $b=0$ and $(a_1,\ldots,a_n)=-\eb_i$ for some $i\in [n]$ (called a \textit{trivial facet}), or 
        \item[(ii)] $b>0$ and $(a_1,\dots,a_n)\in \ZZ_{\geq 0}^n$.
    \end{enumerate}
    
    Suppose that there is a lattice point $(c_1,\ldots,c_n)$ in ${\rm int}(k\Pc_G)$. 
    To see that $\eb_{[n]} \in {\rm int}(k\Pc_G)$, it is enough to show that $\eb_{[n]}$ satisfies any inequality of the form (ii) strictly, that is, $\sum_{i=1}^na_i<kb$.

    Since $(c_1,\ldots,c_n)\in {\rm int}(k\Pc_G)$, one has $(c_1,\ldots,c_n)\in \ZZ_{>0}^n$ and $\sum_{i=1}^na_ic_i<kb$.
    Therefore, we get $\sum_{i=1}^na_i\leq \sum_{i=1}^na_ic_i <kb$, as desired.

    The converse is trivial.
\end{proof}

Next, we prove the lower bound of Theorem~\ref{thm:bound_codeg}.

\begin{Proposition}\label{prop:lower_bound}
    Let $G$ be a graph on $[n]$.
    Then one has $\codeg{\Pc_G}\geq \omega(G)+1$. Moreover, one has $\codeg{\Pc_G}=\omega(G)+1$ if $G$ is perfect.
\end{Proposition}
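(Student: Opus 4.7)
The plan is to exploit the clique inequalities of $\Pc_G$, both as valid inequalities (for the lower bound on any graph) and as a complete facet description (for the equality statement when $G$ is perfect), combined with the criterion from Lemma~\ref{lem:int_point} which says that $\codeg{\Pc_G}$ is witnessed by the all-ones vector $\eb_{[n]}$.

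For the lower bound $\codeg{\Pc_G}\geq \omega(G)+1$, I would fix a clique $C\subset [n]$ with $|C|=\omega(G)$ and observe that $\sum_{i\in C} x_i\leq 1$ is a valid inequality for $\Pc_G$: any stable set meets $C$ in at most one vertex, so the inequality holds at every vertex $\eb_S$ of $\Pc_G$. Moreover it is a supporting hyperplane, since $\eb_{\{i\}}$ achieves equality for any $i\in C$. Consequently, every point of $\mathrm{int}(k\Pc_G)$ satisfies $\sum_{i\in C}x_i<k$. By Lemma~\ref{lem:int_point}, if $k=\codeg{\Pc_G}$ then $\eb_{[n]}\in\mathrm{int}(k\Pc_G)$, so evaluating at $\eb_{[n]}$ gives $\omega(G)=|C|<k$, hence $k\geq \omega(G)+1$.

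For the equality when $G$ is perfect, I would invoke Proposition~\ref{prop:poly_perfect}, which says that $\Pc_G$ is cut out by the nonnegativity constraints $x_i\geq 0$ and the maximal-clique constraints $\sum_{i\in Q}x_i\leq 1$. Scaling, the dilation $k\Pc_G$ is cut out by $x_i\geq 0$ and $\sum_{i\in Q}x_i\leq k$. Taking $k=\omega(G)+1$, I check that $\eb_{[n]}$ satisfies all these strictly: the nonnegativity constraints become $1>0$, and for any maximal clique $Q$ we have $\sum_{i\in Q}1=|Q|\leq \omega(G)<\omega(G)+1=k$. Therefore $\eb_{[n]}\in\mathrm{int}((\omega(G)+1)\Pc_G)$, and Lemma~\ref{lem:int_point} yields $\codeg{\Pc_G}\leq \omega(G)+1$, completing the proof.

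There is no real obstacle here: both directions follow immediately once one has the interior-point reduction of Lemma~\ref{lem:int_point} and the facet description of $\Pc_G$ for perfect $G$. The only subtle point, and the one I would be careful to state precisely, is that for the lower bound a clique inequality need not be a facet (the clique $C$ need not be maximal), but validity plus the existence of a single vertex $\eb_{\{i\}}$ on the supporting hyperplane is enough to force the strict inequality at every interior point.
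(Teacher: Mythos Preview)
Your proof is correct and follows essentially the same approach as the paper: both reduce to the interior-point criterion of Lemma~\ref{lem:int_point}, use the maximum-clique inequality for the lower bound, and invoke Proposition~\ref{prop:poly_perfect} for the perfect case. The only minor difference is that the paper cites Padberg's result that a maximum-clique inequality is a facet, whereas you argue directly (and more self-containedly) that it is a valid supporting inequality, which suffices.
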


\begin{proof}
    For a  clique $Q$ of $G$ with $|Q|=\omega(G)$, the inequality $\sum_{i\in Q}x_i \leq 1$ defines a facet of $\Pc_G$ (\cite[Theorem~2.4]{PadbergPackingpolyhedra}).
    Thus, if $\eb_{[n]}\in {\rm int}(k\Pc_G)$ for some $k\in \ZZ_{>0}$, then $k$ must be greater than $\omega(G)$, which implies that $\omega(G)+1\leq \codeg{\Pc_G}$ from Lemma~\ref{lem:int_point}.

    Suppose that $G$ is perfect. Then any non-trivial facet of $\Pc_G$ is defined by the inequality $\sum_{i\in Q}x_i \leq 1$ for some maximal clique of $G$ (Proposition \ref{prop:poly_perfect}).
    Thus, we have $\codeg{\Pc_G}=\omega(G)+1$.
\end{proof}
Note that, for a perfect graph $G$, the equality $\codeg{\Pc_G} = \omega(G)+1$ is also shown in the proof of \cite[Theorem 2.1 (b)]{OhsugiHibiSpecialsimplicies}.

Before proceeding to the proof of Theorem \ref{thm:bound_codeg}, we recall the notion of the join of graphs. 
Let $G_1$ and $G_2$ be graphs on disjoint vertex sets. 
The \emph{join} of $G_1$ and $G_2$ is the graph obtained from the disjoint union of $G_1$ and $G_2$ 
by adding an edge between every vertex of $G_1$ and every vertex of $G_2$. 
The join of more than two graphs is defined analogously by repeatedly applying this construction.

Now, we turn to prove Theorem \ref{thm:bound_codeg}.

\begin{proof}[Proof of Theorem~\ref{thm:bound_codeg}]
    We have already shown that $\codeg{\Pc_G}\geq \omega(G)+1$ in Proposition~\ref{prop:lower_bound}, so we prove that $\codeg{\Pc_G}\leq \chi(G)+1$.
    By Lemma~\ref{lem:int_point}, it is enough to show that $\eb_{[n]}\in {\rm int}((\chi(G)+1)\Pc_G)$.

    We take a proper coloring $f : [n] \to [\chi(G)]$ of $G$.
    By the minimality of $\chi(G)$, we have $f^{-1}(i) \neq \emptyset$ for any $1 \leq i \leq \chi(G)$.
    For any $1 \leq i \leq \chi(G)$, let $G_i$ be the graph on $f^{-1}(i)$ with no edges, and let $G'$ be the join of $G_1,\ldots,G_{\chi(G)}$.
    Then since $G'$ is a complete multipartite graph, $G'$ is a perfect graph on $[n]$ with $\chi(G')=\chi(G)$. 
    Hence $\codeg{\Pc_{G'}}=\omega(G')+1=\chi(G')+1=\chi(G)+1$ from Proposition~\ref{prop:lower_bound}.
    By Lemma~\ref{lem:int_point}, this implies that $\eb_{[n]}\in {\rm int}((\chi(G)+1)\Pc_{G'}$.
    On the other hand, for each $i\in [\chi(G)]$, $f^{-1}(i)$ is a stable set of $G$. Moreover, the set $\{f^{-1}(1),\ldots,f^{-1}(\chi(G))\}$ coincides with the set of all maximal stable sets of $G'$.
    Hence $\Pc_{G'} \subset \Pc_{G}$. It then follows from $\dim \Pc_G=\dim \Pc_{G'}$ that ${\rm int}((\chi(G)+1)\Pc_{G'} \subset {\rm int}((\chi(G)+1)\Pc_G$. In particular, one has $\eb_{[n]} \in {\rm int}((\chi(G)+1)\Pc_G$.
\end{proof}

\section{Explicit formula for the codegree  of a stable set polytope}
\label{sect:formula}
In this section, for several classes of graphs, we give an explicit formula of $\codeg{\Pc_G}$ in terms of $G$.
In particular, proofs of Theorems \ref{thm:matching_polytope} and \ref{thm:h-perfect} are given.
\subsection{Matching polytopes}
\label{subsect:matching}
Let $G$ be a graph on $[n]$ with edge set $E(G)$.
A \textit{matching} $M$ of a graph $G$ is a subset of $E(G)$ such that every vertex of $G$ is incident to at most one edge in $M$.
The \textit{matching polytope} of $G$ is the convex hull of the indicator vectors of the matchings on $G$, that is,
    \[
    \Mc_G = \conv\left\{ \eb_M \in \RR^{E(G)} : M\subset E(G) \text{ is a matching of } G \right\}.
    \]
The matching polytope $\Mc_G$ is a stable set polytope of some graph.
Indeed, the \textit{line graph} $L(G)$ of $G$ is a graph whose vertex set is $E(G)$ and whose edge set is 
\[
\{ \{e, e'\} \subset \ E(G) : e \neq e' \mbox{ and } e \cap e' \neq \emptyset\}.
\]
Then one has $\Mc_G=\Pc_{L(G)}$ by changing coordinates.
    
The matching polytope has the following description:
\begin{Proposition}[{\cite{Edmonds1965}}]\label{prop:ineq_match}
Let $G$ be a graph on $[n]$ with edge set $E(G)$. Then one has 
\begin{align*}
    \Mc_G 
    =
    \left\{ 
    x \in \RR^{E(G)} \; : \; 
    \begin{array}{rlc}
        x(e) &\geq 0, & \forall e \in E(G) \vspace{0.2cm}\\
        \displaystyle \sum_{e \in \iota_G(v)} x(e) &\leq 1, &  \forall v \in [n] \\
        \displaystyle \sum_{e \in E(G_U)} x(e) &\leq \frac{|U| - 1}{2}, &  \forall U\subset [n] : \text{$|U|$ is odd}
    \end{array}
    \right\},
\end{align*}
where $\iota_G(v)$ and $G_U$ denote the set of all edges incident to a vertex $v$ in $G$ and the induced subgraph of $G$ with vertex set $U$, respectively.
\end{Proposition}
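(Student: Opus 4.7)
The plan is to establish both inclusions of the claimed equality, writing $P$ for the polytope on the right-hand side of the displayed formula.

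The inclusion $\Mc_G \subseteq P$ is direct: for any matching $M \subseteq E(G)$, the indicator vector $\eb_M$ is nonnegative; at most one edge of $M$ meets each vertex $v$, so $\sum_{e \in \iota_G(v)} \eb_M(e) \leq 1$; and for odd $|U|$, any matching inside $G_U$ has at most $\lfloor |U|/2 \rfloor = (|U|-1)/2$ edges. Hence every vertex of $\Mc_G$ lies in $P$, so $\Mc_G \subseteq P$.

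For $P \subseteq \Mc_G$, I would show that every vertex $x^*$ of $P$ is the indicator vector of a matching, by strong induction on $|V(G)|+|E(G)|$. If $x^*(e_0)=0$ for some edge $e_0$, then the restriction of $x^*$ to $E(G-e_0)$ is a vertex of the analogous polytope for $G - e_0$, and the inductive hypothesis yields $x^* \in \Mc_{G-e_0} \subseteq \Mc_G$. If $x^*(e_0)=1$ for some edge $e_0 = \{u,v\}$, the tight degree inequalities at $u$ and $v$ force $x^*(e)=0$ for every other edge at $u$ or $v$, and the restriction of $x^*$ to $G - \{u,v\}$ lies in the analogous polytope for $G-\{u,v\}$; one checks that each odd-set inequality in $G$ for a set of the form $U \cup \{u,v\}$ (with $|U|$ odd) reduces, after subtracting $x^*(e_0)=1$, to the odd-set inequality for $U$ in $G-\{u,v\}$, so induction again applies.

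The substantive case is when $0 < x^*(e) < 1$ for every edge $e$. Then no nonnegativity constraint is tight, and extremality forces every degree inequality to be tight together with enough odd-set inequalities to cut out $x^*$ uniquely. If $G$ were bipartite the vertex polytope would already be integral, contradicting $0 < x^*(e) < 1$; so there is an odd cycle, and a rank count shows that at least one odd-set inequality $\sum_{e \in E(G_U)} x^*(e) = (|U|-1)/2$ with $|U| \geq 3$ is tight. I would then split $x^*$ along $U$: contract $U$ to a single vertex $u_U$ in $G$ to form $G/U$ with induced point $x^*/U$ on the edges meeting but not inside $U$, and consider $x^*|_{G_U}$ on $G_U$. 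Both induced points satisfy the defining inequalities of their respective matching polyhedra, so by induction each decomposes as a convex combination of matching indicators. The tightness of the odd-set constraint at $U$, which expresses that $x^*|_{G_U}$ averages only near-perfect matchings of $G_U$ each leaving a single vertex of $U$ unmatched, allows the two decompositions to be glued consistently along the cut $\delta(U)$, yielding a decomposition of $x^*$ as a convex combination of matching indicators of $G$, contrary to the assumption that $x^*$ is not itself a matching indicator.

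The principal obstacle is the splitting-and-gluing step in the third case: one must justify both that the contracted and induced points lie in the matching polyhedra of $G/U$ and $G_U$ (so that the inductive hypothesis applies) and that the combinatorial surgery along $\delta(U)$ assigns the correct total weight to every cut edge. This relies essentially on the exact equality $\sum_{e \in E(G_U)} x^*(e) = (|U|-1)/2$ and on the structure of near-perfect matchings of $G_U$ with a specified deficient vertex, and it is precisely the combinatorial heart of Edmonds' theorem.
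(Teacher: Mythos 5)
The first thing to observe is that the paper offers no proof of this statement: it is Edmonds' matching polytope theorem, quoted verbatim from the 1965 paper cited in the proposition, so there is no in-paper argument to measure your attempt against. Your easy inclusion $\Mc_G \subseteq P$ is correct, and the two reduction steps ($x^*(e_0)=0$: delete the edge; $x^*(e_0)=1$: delete both endpoints and check that the odd-set constraints survive) are exactly how the standard inductive proofs of this theorem begin.

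The genuine gap is the one you name yourself: the splitting-and-gluing along a tight odd set $U$ is not an ``obstacle'' to be flagged but the entire content of the theorem, and your sketch does not supply it. Concretely, three things are missing. First, the existence of a tight odd-set constraint with $|U|\ge 3$ comes from the rank count $|E|>|V|$ (a vertex with no tight nonnegativity constraint needs $|E|$ independent tight constraints, and only $|V|$ of them are degree constraints); the sparse case $|E|\le|V|$, where a minimal counterexample is forced to be an odd cycle, must be handled separately, and ``if $G$ were bipartite the polytope would be integral'' does not cover a non-bipartite graph with $|E|\le|V|$. Second, if the only tight odd set is $U=V(G)$, then $G_U=G$ and your induction does not reduce, so this case also needs its own treatment. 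Third, and most importantly, the gluing itself: one must verify that the contracted point lies in $P(G/U)$ --- the degree constraint at the new vertex follows from $\sum_{e\in\delta(U)}x^*(e)\le |U|-2\cdot\frac{|U|-1}{2}=1$, where $\delta(U)$ is the set of edges with exactly one end in $U$, and the odd-set constraints of $G/U$ again use tightness of $U$ --- and then one must pair up, with a common denominator $k$ and edge by edge across $\delta(U)$, the near-perfect matchings of $G_U$ with the matchings of $G/U$. That counting argument is the combinatorial heart of Edmonds' theorem and is precisely what you have not written down. As it stands the proposal is a correct road map for the standard proof, but it does not constitute a proof.
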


Now, we prove Theorem \ref{thm:matching_polytope}.

\begin{proof}[Proof of Theorem~\ref{thm:matching_polytope}]
    Since $\Mc_G=\Mc_{G_1}\times \cdots \times \Mc_{G_r}$, one has \[\codeg{\Mc_G}=\max\{\codeg{\Mc_{G_1}},\ldots,\codeg{\Mc_{G_r}}\}\]
by Lemma~\ref{lem:int_point}, where $G_1,\ldots,G_r$ are the connected components of $G$.
    Thus, we may assume that $G$ is connected and it suffices to prove that $\codeg{\Mc_G}=\Delta(G)+2$ if $G$ is a complete graph $K_m$ with $m$ vertices, where $m$ is odd, otherwise $\codeg{\Mc_G}=\Delta(G)+1$.
    
    By Proposition~\ref{prop:ineq_match}, if $\eb_{E(G)}\in {\rm int}(k\Mc_G)$ for some $k\in \ZZ_{>0}$, then the inequalities $|\iota_G(v)|<k$ and $|E(G_U)|<k\frac{|U|-1}{2}$ hold for any $v\in G$ and $U\subset [n]$ such that $|U|$ is odd.
    In particular, $k$ must be greater than $\Delta(G)$ for the first inequality to hold.

    On the other hand, it follows from the handshaking lemma and $|U|-\Delta(G_U)-1\geq0$ that
    \[
        |E(G_U)|=\frac{1}{2}\sum_{v\in U}|\iota_{G_U}(v)| \underset{(a)}{\leq} \frac{\Delta(G_U)\cdot |U|}{2} \underset{(b)}{\leq} \frac{(\Delta(G_U)+1)\cdot (|U|-1)}{2} \underset{(c)}{\leq} (\Delta(G)+1)\frac{|U|-1}{2}.
    \]
    The equalities of (a) and (b) hold if and only if $G_U=K_{|U|}$.
    In addition, the equality of (c) holds as well, that is,  we have $|E(G_U)|=(\Delta(G)+1)\frac{|U|-1}{2}$ if and only if $G_U=K_{|U|}=G$ since $G$ is connected.
    Therefore, if $G=K_m$ for some odd integer $m$, then $\codeg{\Mc_G}=\Delta(G)+2$, otherwise $\codeg{\Mc_G}=\Delta(G)+1$.
\end{proof}

\subsection{The stable set polytopes of  $h$-perfect graphs}
\label{subsect:h-perfect}
Let $G$ be a graph on $[n]$. We say that $G$ is \textit{$h$-perfect} \cite{h-perfect} if the stable set polytope has the following description:
\begin{align*}
    \Pc_G 
    =
    \left\{ 
    x \in \RR^n \; : \; 
    \begin{array}{rlc}
        x_i &\geq 0, & \forall i \in [n] \vspace{0.2cm}\\
        \displaystyle \sum_{i \in Q} x_i &\leq 1, &  \text{ for any clique $Q$} \\
        \displaystyle \sum_{i \in C} x_i &\leq \frac{|C| - 1}{2}, &  \text{ for any odd cycle $C$}
    \end{array}
    \right\}.
\end{align*}
From Proposition \ref{prop:poly_perfect} every perfect graph is $h$-perfect. 
However, an $h$-perfect graph is not necessarily perfect. For example, it is known that every odd cycle of length $\geq 5$ is $h$-perfect, but not perfect.

Now, we give a proof of Theorem \ref{thm:h-perfect}.
\begin{proof}[Proof of Theorem~\ref{thm:h-perfect}]
    If $\omega(G)=1$, that is, $G$ has no edges, then our assertion clearly holds, thus we may assume that $\omega(G)\geq 2$.

    By Proposition~\ref{prop:lower_bound}, we have $\codeg{\Pc_G}\geq \omega(G)+1$, so it suffices to show that $\eb_{[n]}\in {\rm int}((\omega(G)+1)\Pc_G)$.
    To this end, we show that $|C|<(\omega(G)+1)\frac{|C|-1}{2}$, equivalently $2<(\omega(G)-1)(|C|-1)$ for any odd cycle $C$ of $G$.
    This is true if $|C|\geq5$, and even if $|C|=3$, $\omega(G)\geq 3$ in this case, so it is true either way.
\end{proof}
\section{bounds on the regularities of the toric ring associated with $\Pc_G$}
\label{sect:reg}
Let $S = K[x_{1}, \ldots, x_{n}]$ denote the polynomial ring in $n$ variables over a field $K$ with each $\deg{x_i} = 1$.  Let $0 \neq I \subset S$ be a homogeneous ideal of $S$ and 
\[
0 \to \bigoplus_{j \geq 1} S(-j)^{\beta_{h, j}} \to \cdots \to \bigoplus_{j \geq 1} S(-j)^{\beta_{1, j}} \to S \to S/I \to 0
\]
a (unique) graded minimal free $S$-resolution of $S/I$.  The ({\em Castelnuovo-Mumford}) \textit{regularity} of $S/I$ is defined by
\[
\reg{S/I} = \max\{ j - i : \beta_{i, j} \neq 0 \}. 
\] 
See, e.g., \cite{BrunsHerzog} for detailed information about regularity.

In this section, we discuss the regularity of the toric rings associated with stable set polytopes.
In particular, a proof of Theorem \ref{thm:reg} is given.
Let $\Pc \subset \RR^n_{\geq 0}$ be a full-dimensional lattice polytope with $\Pc \cap \ZZ^n=\{\ab_1,\ldots,\ab_d\}$, and
let $K[\tb,s]:=K[t_1,\ldots,t_n,s]$ be the polynomial ring in $n+1$ variables  over a field $K$.
Given a nonnegative integer vector 
$\ab=(a_1,\ldots,a_n) \in  \ZZ_{\geq 0}^d$, we write 
$\tb^{\ab}:=t_1^{a_1} t_2^{a_2}\cdots t_n^{a_n} \in K[\tb,s]$.
The \textit{toric ring} of $\Pc$ is 
\[
K[\Pc]:=K[\tb^{\ab_1} s,\ldots,\tb^{\ab_d}s] \subset K[\tb,s]. 
\]
We regard $K[\Pc]$ as a homogeneous algebra by setting each $\deg{\tb^{\ab_i} s}=1$.
Let $R[\Pc]=K[x_{1},\ldots,x_{d}]$ denote the polynomial ring in $d$ variables over  $K$ with each $\deg{x_i}=1$.
The \textit{toric ideal} of $\Pc$ is the kernel of the surjective homomorphism 
$\pi:R[\Pc] \to K[\Pc]$ defined by $\pi(x_{i})=\tb^{\ab_i}s$ for $1 \leq i \leq d$.
Note that $I_{\Pc}$ is a prime ideal generated by homogeneous binomials.
See, e.g., \cite{HHObook} for details on toric rings and toric ideals.

We recall some properties for $K[\Pc]$.
A full-dimensional lattice polytope $\Pc \subset \RR^n$ is called \textit{spanning} if every lattice point in $\ZZ^n$ is affine integer combination of the lattice points in $\Pc$.
Moreover, we say that $\Pc \subset \RR^n$ has the \textit{integer decomposition property} if for every integer $k \geq 1$, every lattice point in $k\Pc$ is a sum of $k$ lattice points from $\Pc$.
A lattice polytope which has the integer decomposition property is called \textit{IDP}.
It is easy to see that an IDP polytope is spanning.
Furthermore, a spanning polytope $\Pc$ is IDP if and only if the toric ring $K[\Pc]$ is normal. In this case, $K[\Pc]$ is Cohen--Macaulay.

The regularity $\reg{K[\Pc]}:=\deg{R[\Pc]/I_{\Pc}}$ of $K[\Pc]$ does not have a direct combinatorial interpretation. However, if $\Pc$ is spanning, then $\deg{\Pc}$ gives a lower bound on $\reg{K[\Pc]}$.
\begin{Proposition}[{\cite[p. 5952]{HKN}}]
	\label{prop:span}
	Let $\Pc \subset \RR^{n}$ be a spanning lattice polytope. 
	Then one has $\reg{K[\Pc]} \geq \deg{\Pc}$. 
\end{Proposition}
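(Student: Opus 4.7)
The plan is to compare $R := K[\Pc]$ to its normalization $\overline{R}$ and transfer an $a$-invariant computation through local cohomology. The spanning hypothesis plays the crucial role: it guarantees that the group generated by $\{(\ab,1) : \ab \in \Pc \cap \ZZ^n\}$ equals the full lattice $\ZZ^{n+1}$, so $\overline{R}$ is the standard normal semigroup ring $K[C(\Pc) \cap \ZZ^{n+1}]$, where $C(\Pc) \subset \RR^{n+1}$ is the cone over $\Pc \times \{1\}$. By Hochster's theorem $\overline{R}$ is Cohen--Macaulay of Krull dimension $n+1$, and its Hilbert series is $h^{*}(t)/(1-t)^{n+1}$, where $h^{*}$ is the $h^{*}$-polynomial of $\Pc$. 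The standard formula for the top $a$-invariant of a graded Cohen--Macaulay algebra then gives $a(\overline{R}) = \deg{\Pc} - (n+1) = -\codeg{\Pc}$.

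Next I would consider the short exact sequence of graded $R$-modules $0 \to R \to \overline{R} \to C \to 0$, where $C$ is the cokernel. Since the inclusion is a finite integral extension between rings of the same Krull dimension $n+1$, the cokernel $C$ has Krull dimension at most $n$. Taking local cohomology with respect to the graded maximal ideal $\mm$ produces the exact sequence
\[
H^{n+1}_{\mm}(R) \longrightarrow H^{n+1}_{\mm}(\overline{R}) \longrightarrow H^{n+1}_{\mm}(C) = 0,
\]
where the last vanishing comes from $\dim C \leq n$. Hence the first map is surjective, and comparing top nonvanishing degrees yields $a(R) \geq a(\overline{R}) = -\codeg{\Pc}$, where $a(R)$ denotes the top $a$-invariant $a_{\dim R}(R)$.

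Finally, I would invoke the general lower bound $\reg{R} \geq a(R) + \dim R$, which follows from the local-cohomology characterization $\reg{R} = \max_i\{a_i(R) + i\}$ (cf.\ \cite{BrunsHerzog}). This immediately gives
\[
\reg{K[\Pc]} \geq -\codeg{\Pc} + (n+1) = \deg{\Pc},
\]
as desired. The main obstacle is the identification in the first step: without the spanning hypothesis, the normalization of $R$ would be taken with respect to a proper sublattice of $\ZZ^{n+1}$, and its Hilbert series would no longer agree with the $h^{*}$-polynomial of $\Pc$, so the $a$-invariant computation would fail. The rest of the argument is essentially formal once the correct normalization is in hand.
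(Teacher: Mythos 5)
The paper does not actually prove this proposition: it is imported verbatim from \cite[p.~5952]{HKN}, so there is no in-text argument to compare against. Your proposal is a correct, self-contained reconstruction, and it follows the route one would expect for this statement. Each step checks out: the spanning hypothesis gives $\ZZ\{(\ab,1):\ab\in\Pc\cap\ZZ^n\}=\ZZ^{n+1}$, so the normalization of $R=K[\Pc]$ is the full Ehrhart ring $K[C(\Pc)\cap\ZZ^{n+1}]$, which is Cohen--Macaulay by Hochster with Hilbert series $h^*(t)/(1-t)^{n+1}$ and hence $a(\overline{R})=\deg{\Pc}-(n+1)=-\codeg{\Pc}$; the cokernel $C=\overline{R}/R$ is killed by the (nonzero) conductor of the module-finite extension $R\subset\overline{R}$ of domains, so $\dim C\le n$ and Grothendieck vanishing kills $H^{n+1}_{\mm}(C)$; the resulting surjection $H^{n+1}_{\mm}(R)\twoheadrightarrow H^{n+1}_{\mm}(\overline{R})\neq 0$ gives $a_{n+1}(R)\ge a(\overline{R})$; and the Eisenbud--Goto characterization $\reg{R}=\max_i\{a_i(R)+i\}$ finishes the estimate. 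You also correctly isolate where spanning is indispensable, namely in identifying the underlying lattice of the normalization with $\ZZ^{n+1}$ so that its Hilbert series is the Ehrhart series of $\Pc$. Two small points you could make explicit if this were written up: the finiteness of the normalization (hence $\dim C<\dim R$) uses that $R$ is an affine domain over a field, and the independence-of-base-ring fact that $H^i_{\mm}(\overline{R})$ computed over $R$ agrees with local cohomology at the graded maximal ideal of $\overline{R}$. Neither is a gap; both are standard.
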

On the other hand, it is known that if $K[\Pc]$ is Cohen--Macaulay, then $\reg{K[\Pc]}$ is equal to the degree of the $h$-polynomial of $K[\Pc]$ (\cite[Corollary 2.18]{HHObook}).
Moreover, if $\Pc$ is IDP, then $\deg{\Pc}$ coincides with the degree of the $h$-polynomial of $K[\Pc]$ (\cite[Theorem 4.5 and Lemma 4.22 (b)]{HHObook}). Hence we know the following.
\begin{Proposition}\label{prop:IDP}
	Let $\Pc \subset \RR^{n}$ be an IDP lattice polytope. 
	Then one has $\reg{K[\Pc]} = \deg{\Pc}$. 
\end{Proposition}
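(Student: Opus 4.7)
The plan is to assemble the statement directly from the two facts recorded in the paragraph preceding it, together with the implication ``IDP $\Rightarrow$ normal $\Rightarrow$ Cohen--Macaulay'' that is also stated in the excerpt. Concretely, since $\Pc$ is IDP it is in particular spanning, and the excerpt notes that a spanning IDP polytope yields a normal toric ring $K[\Pc]$, and that such $K[\Pc]$ is Cohen--Macaulay (Hochster). Thus $K[\Pc]$ meets the hypothesis needed to invoke \cite[Corollary~2.18]{HHObook}.

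First I would apply \cite[Corollary~2.18]{HHObook} to conclude
\[
\reg{K[\Pc]} \;=\; \deg h(K[\Pc];\lambda),
\]
where $h(K[\Pc];\lambda)$ denotes the $h$-polynomial of the (standard graded, Cohen--Macaulay) toric ring $K[\Pc]$. Next I would use \cite[Theorem~4.5 and Lemma~4.22(b)]{HHObook}, which identify, in the IDP setting, the $h$-polynomial of $K[\Pc]$ with the $h^*$-polynomial of $\Pc$; consequently
\[
\deg h(K[\Pc];\lambda) \;=\; \deg h^*(\Pc;\lambda) \;=\; \deg{\Pc},
\]
the last equality being the definition of $\deg{\Pc}$ recalled in the introduction (as the degree of the $h^*$-polynomial of $\Pc$).

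Combining these two equalities gives $\reg{K[\Pc]} = \deg{\Pc}$, which is exactly the claim. There is no real obstacle here: the proof is a two-line citation assembly, and the only thing to be mindful of is to remark explicitly that the IDP hypothesis on $\Pc$ is strong enough to activate both inputs, namely the normality (hence Cohen--Macaulayness) of $K[\Pc]$ needed for \cite[Corollary~2.18]{HHObook}, and the agreement of the two $h$-type polynomials provided by \cite[Theorem~4.5 and Lemma~4.22(b)]{HHObook}.
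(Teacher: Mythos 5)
Your proposal is correct and follows essentially the same route as the paper, which derives the proposition from exactly the same two citations (\cite[Corollary 2.18]{HHObook} for the Cohen--Macaulay case and \cite[Theorem 4.5 and Lemma 4.22(b)]{HHObook} for the IDP identification of the $h$- and $h^*$-polynomials), together with the chain IDP $\Rightarrow$ normal $\Rightarrow$ Cohen--Macaulay recorded just before the statement. No discrepancies to report.
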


Now, we prove Theorem \ref{thm:reg}.
\begin{proof}[Proof of Theorem \ref{thm:reg}]
The origin ${\bf 0}$ and each unit coordinate vector $\eb_i$ are vertices of the stable set polytope $\Pc_G$. This implies that $\Pc$ is spanning.
Since $\deg{\Pc_G}=n+1-\codeg{\Pc_G}$, it then follows from Theorem \ref{thm:bound_codeg} and Proposition \ref{prop:span}
that 
\[
n+1-\chi(G)-1 = n-\chi(G) \leq \deg{\Pc_G} \leq \reg{K[\Pc_G]}.\]
On the other hand, if $\Pc$ is IDP, then from Theorem \ref{thm:bound_codeg}
and Proposition \ref{prop:IDP} one has
\[
\reg{K[\Pc_G]}=\deg{\Pc_G}=n+1-\codeg{\Pc_G} \leq n+1 -\omega(G)-1=n-\omega(G).
\]
If $G$ is perfect, then $\Pc_G$ is compressed, in particular, it is IDP (\cite{OhsugiHibiSquarefree}).
Hence we obtain
\[
 n-\chi(G)=\reg{K[\Pc_G]}=n-\omega(G),
\]
as desired.
\end{proof}


In general, stable set polytope are not always IDP. For example, let $G$ be the graph whose complement graph is the disjoint union of two $5$-cycles. Then $\Pc_G$ is not IDP \cite[Proposition 8]{MOS}, in particular, $K[\Pc_G]$ is not normal. Hence we cannot apply Theorem \ref{thm:reg} to obtain an upper bound on $K[\Pc_G]$. However, in this case, by using Macaulay2 \cite{M2}, one has $\reg{K[\Pc_G]}=6$. Hence we obtain $\reg{K[\Pc_G]}=10-\omega(G)$.
Therefore, the following question naturally occurs.
\begin{Question}
    Let $G$ be a graph on $[n]$. Then does the following inequality hold?
    \[
\reg{K[\Pc_G]} \leq n-\omega(G).
    \]
\end{Question}

\section{Examples related with Theorem \ref{thm:bound_codeg}}
\label{sect:exam}
We return the inequalities in Theorem \ref{thm:bound_codeg}. Then there are the following 4 cases:
\begin{enumerate}[(i)]
    \item $\omega(G)+1=\codeg{\Pc_G} = \chi(G)+1$;
    \item $\omega(G)+1 < \codeg{\Pc_G} = \chi(G)+1$;
    \item $\omega(G)+1 = \codeg{\Pc_G} < \chi(G)+1$;
    \item $\omega(G)+1 < \codeg{\Pc_G} < \chi(G)+1$.
\end{enumerate}
For each case, is there a graph $G$ satisfying the condition?
We can give an example for each case.
\begin{Example}
\label{exam:inequalities}
{\rm 
(1) Every perfect graph satisfies the condition (i).

(2) Let $n \geq 5$ be an odd integer and set $G=L(K_n)$.
Then one has $\omega(G)=n-1, \chi(G)=n$ and $\Delta(G)=n-1$. Hence it follows from Theorem \ref{thm:matching_polytope} that $G$ satisfies the condition (ii).

(3) Let $n \geq 5$ be an odd integer and let $G=C_n$ be an odd cycle of length $n$.
Then one has $\omega(G)=2$ and $\chi(G)=3$. Since it is known that $C_n$ is $h$-perfect, from Theorem \ref{thm:h-perfect} $G$ satisfies the condition (iii).

(4) Assume that $C_5$ and $L(K_5)$ have no common vertices and let $G$ be the join of $C_5$ and $L(K_5)$.
Then one has $\omega(G)=\omega(C_5)+\omega(L(K_5))= 2+4=6$ and $\chi(G)=\chi(C_5)+\chi(L(K_5))=3+5=8$. Moreover, by using Normaliz \cite{normaliz}, we obtain $\codeg{\Pc_G}=8$. In particular, $(1,\ldots,1) \in \RR^{15}$} is an interior lattice point in $8\Pc_G$ from Lemma \ref{lem:int_point}.
Hence $G$ satisfies the condition (iv).
\end{Example}
As a preliminary question, one may ask whether the differences
\(\codeg{\Pc_G} - (\omega(G)+1)\) and \((\chi(G)+1) - \codeg{\Pc_G}\)
can be made arbitrarily large, either simultaneously or individually while keeping the other bounded or zero.
More general, we can consider the following problem.
\begin{Problem}
    Let $a,b,c$ be integers satisfying 
    \[
    0 < a \leq b \leq c.
    \]
    Determine when there exists a graph $G$ satisfying
    \begin{align*}
        \omega(G)+1=a,  \ \ \ \ \ \codeg{\Pc_G}=b, \ \ \ \ \  \chi(G)+1= c.
    \end{align*}
    In other words, 
determine all the possible sequences
\[
(\omega(G)+1, \codeg{\Pc_G}, \chi(G)+1).
\]
\end{Problem}
Note that the case $a=b=c$ in this problem can be solved by considering perfect graphs.
In investigating this problem, it seems useful to consider the join of graphs.
Indeed, for graphs $G_1$ and $G_2$ and their join $G$, one has
\[
\omega(G) = \omega(G_1) + \omega(G_2),  \chi(G) = \chi(G_1) + \chi(G_2).\]
Moreover, in Example \ref{exam:inequalities} (4), we have
\begin{align*}
\deg{\Pc_G}& = 8 = 3+5= \deg{\Pc_{C_5}} + \deg{\Pc_{L(K_5)}},\\
\codeg{\Pc_G}&= 3+6-1 =  \codeg{\Pc_{C_5}} + \codeg{\Pc_{L(K_5)}} - 1.
\end{align*}
However, in general these equalities for the degree and the codegree do not hold.
We present an example where these equalities fail.
\begin{Example}
    {\rm 
    Let $G$ be the join of two $5$-cycles.
    Then one has $\deg{\Pc_G}=5$ and $\codeg{\Pc_G}=6$.
    In particular, this example also satisfies the condition (iv) since $\omega(G)=4$ and $\chi(G)=6$.
    On the other hand, $\deg{\Pc_{C_5}}=3$ and $\codeg{\Pc_{C_5}}=3$. Hence we know that
\begin{align*}
\deg{\Pc_{C_5}}+\deg{\Pc_{C_5}}&=6 \neq 5 =\deg{\Pc_{G}},\\
\codeg{\Pc_{C_5}}+\codeg{\Pc_{C_5}}-1&=5 \neq 6 =\codeg{\Pc_{G}}.
\end{align*}}
\end{Example}
Thus, for graphs $G_1$ and $G_2$ and their join $G$, it is natural to ask
when the equalities
\begin{align*}
\deg{\Pc_G} &= \deg{\Pc_{G_1}} + \deg{\Pc_{G_2}},\\
\codeg{\Pc_G} &= \codeg{\Pc_{G_1}} + \codeg{\Pc_{G_2}} - 1
\end{align*}
hold.
A complete characterization of such pairs $(G_1,G_2)$ seems to be an interesting problem.

\subsection*{Acknowledgment}
The authors are grateful to the referees for a careful reading of the manuscript and for many helpful comments and suggestions.
The first author was partially supported by 
Grant-in-Aid for JSPS Fellows 22J20033. 
The second author was partially supported by 
JSPS KAKENHI 
22K13890.

\bibliographystyle{plain}
\bibliography{bibliography}
\end{document}